\newtheorem{thm}{Theorem}
\newtheorem{lem}{Lemma}
\newtheorem{fol}{Corollary}
\begin{document}

\begin{center}
\Large\textbf{Various Optimality Criteria for the Prediction of Individual Response Curves
} \\[11pt]
\normalsize
Maryna Prus\footnote{Maryna Prus: \href{mailto:maryna.prus@ovgu.de}{maryna.prus@ovgu.de}}\\[11pt]

\footnotesize
Otto-von-Guericke University Magdeburg, Institute for Mathematical Stochastics, 
\\
PF 4120, D-39016 Magdeburg, Germany\\
\normalsize
\end{center}

\begin{quote}
\textbf{Abstract:} We consider optimal designs for the Kiefer's cirteria (and, in particular, the E-criterion) and the G-criterion in random coefficients regression (RCR) models. We obtain general the Kiefer's criteria for approximate designs and prove the equivalence of the E-criteria in the fixed effects and RCR models. We discuss in detail the G-criterion for ordinary linear regression on specific design regions.

\textbf{Keywords:} random coefficients regression, mixed models, Kiefer's criteria, E- and G-optimality, individual parameters, prediction. 
\end{quote}

\section{Introduction}

The subject of this work is random coefficients regression (RCR) models, which were initially introduced in biosciences and are nowdays popular in many fields of statistical application, for example in agricultural studies or medical research. 

Linear criteria as well as a generalized version of the \textit{D}-criterion in RCR have been discussed in \cite{pru1}. Some results for the particular case of the c-criterion, interpolation and extrapolation, are presented in \cite{pru4}. 

In this work we concentrate on the Kiefer's $\Phi_q$-criteria (see e.\,g. \cite{fed2}, p.~54, for the fixed effects case), which are based on the eigenvalues of the information matrix in the random coefficient regression models, and the \textit{G}- (global) criterion, which aims to minimize the maximal prediction mean squared error over the experimental region.

For fixed effects models, the \textit{E}-criterion (see e.\,g \cite{atk1}, ch.~10), which can be recognized as the particular Kiefer's $\Phi_q$-criterion for $q\rightarrow\infty$, has been considered in detail in \cite{kie}. \cite{kie2} have proved the equivalence of \textit{D}- and \textit{G}-criteria in models with homoscedastic errors. More recently, \cite{wen} has discussed the \textit{G}-optimality in heteroscedastic models. For that models, it has been established that the \textit{D}- and \textit{G}-criteria are in general not equivalent.

Here, we present some results for the general form of Kiefer’s $\Phi_q$-criteria in RCR models and prove for the \textit{E}-criterion for largest eigenvalue that the optimal designs for fixed effects models retain their optimality for the prediction. We consider the \textit{G}-criterion for ordinary linear regression with a diagonal covariance structure on specific experimental regions.

The paper has the following structure: In the second part the random coefficients regression models will be specified and the best linear unbiased prediction of individual random parameters will be presented. The third part provides analytical results for the designs, which are optimal for the prediction. The paper will be concluded by a short discussion in the forth part.

\section{Model Specification and Prediction}\label{k2}

In this paper we consider the random coefficients regression models, in which the $j$-th observation of individual $i$ is given by
\begin{equation}\label{mod}
	{Y}_{ij}=\mathbf{f}(x_{j})^\top \mbox{\boldmath{$\beta $}}_i+ \varepsilon_{ij},\quad j=1, \dots, m, \quad i=1, \dots, n, \quad x\in \mathcal{X}
\end{equation}
where $m$ is the number of observations per individual, $n$ is the number of individuals,\linebreak $\mathbf{f} =(f_1, \dots,f_p)^\top$ is a vector of known regression functions and $\mathcal{X}$ is an experimental region.
The observational errors $\varepsilon_{ij}$ are assumed to have zero mean and common variance $\sigma^2>0$. 
The individual parameters $\mbox{\boldmath{$\beta $}}_i=( \beta_{i1}, \dots, \beta_{ip})^\top$ have unknown population mean $\mathrm{E}\,(\mbox{\boldmath{$\beta $}}_i)={\mbox{\boldmath{$\beta $}}}$ and known positive definite covariance matrix $\mathrm{Cov}\,(\mbox{\boldmath{$\beta $}}_i)=\sigma^2\mathbf{D}$. 
All individual parameters $\mbox{\boldmath{$\beta $}}_{i}$ and all observational errors $\varepsilon_{ij}$ are assumed to be uncorrelated.

The best linear unbiased predictor of individual parameter $\mbox{\boldmath{$\beta $}}_i$ is given by
\begin{equation}\label{blup}
\hat{\mbox{\boldmath{$\beta $}}}_i=(\mathbf{F}^\top \mathbf{F}+\mathbf{D}^{-1})^{-1}(\mathbf{F}^\top \mathbf{F}\,\hat{\mbox{\boldmath{$\beta $}}}_{i;{\rm ind}}+\mathbf{D}^{-1}\hat{\mbox{\boldmath{$\beta $}}}),
\end{equation}
where $\hat{\mbox{\boldmath{$\beta $}}}_{i;{\mathrm{ind}}}=(\mathbf{F}^\top \mathbf{F})^{-1}\mathbf{F}^\top \mathbf{Y}_i$ and $\hat{\mbox{\boldmath{$\beta $}}}=(\mathbf{F}^\top \mathbf{F})^{-1}\mathbf{F}^\top \bar{\mathbf{Y}} $ for the individual vector of observations $\mathbf{Y}_i=(Y_{i1}, \dots,Y_{im})^\top$, the mean observational vector $\bar{\mathbf{Y}} =\frac{1}{n}\sum_{i=1}^n{\mathbf{Y}_i}$ and the design matrix $\mathbf{F}=(\mathbf{f}(x_1), \dots, \mathbf{f}(x_m))^\top$, which is assumed to be of full column rank.

The mean squared error matrix of the of the vector $\hat{\mathbf{B}}=(\hat{\mbox{\boldmath{$\beta $}}}_1^\top, \dots,\hat{\mbox{\boldmath{$\beta $}}}_n^\top)^\top$ of all predictors of all individual parameters is given by
\begin{equation}\label{mse}
\mathrm{MSE}= \sigma^2\left(\frac{1}{n}\left(\mathds{1}_n\mathds{1}_n^\top \right)\otimes  \left(\mathbf{F}^\top \mathbf{F}\right)^{-1} + \left(\mathds{I}_n-{\textstyle{\frac{1}{n}}}\mathds{1}_{n}\mathds{1}_{n}^\top \right)\otimes \left(\mathbf{F}^\top \mathbf{F}+\mathbf{D}^{-1}\right)^{-1} \right),
\end{equation}
where $\mathds{I}_n$ is identity matrix, $\mathds{1}_n$ is the vector of length $n$ with all elements equal to $1$ and $\otimes$ denotes the Kronecker product.

\section{Optimal Designs}\label{k3}

We define here exact designs in the following way:
\begin{equation}\label{design}
\xi= \left( \begin{array}{ccc}  x_1 & , \dots, & x_k \\  m_1 &, \dots,& m_k \end{array} \right),
\end{equation} 
where $x_1, \dots, x_k$ are the distinct experimental settings with the numbers of replications $m_1, \dots, m_k$. Approximate designs are defined as
\begin{equation}\label{designa}
\xi= \left( \begin{array}{ccc}  x_1 & , \dots, & x_k \\  w_1 &, \dots,& w_k \end{array} \right),
\end{equation}
where $w_j=m_j/m$ and only the conditions $w_j\geq 0$ and $\sum_{j=1}^{k}w_j=1$ have to be satisfied (integer numbers of replications are not required). Further we will use the notation
\begin{equation}\label{m_xi}
\mathbf{M}(\xi)=\frac{1}{m}\sum_{j=1}^k m_j\mathbf{f}(x_j)\mathbf{f}(x_j)^\top
\end{equation} 
for the standardized information matrix from the fixed effects model and $\mbox{\boldmath{$\Delta $}}=m\, \mathbf{D}$ for the adjusted dispersion matrix for the random effects. We assume the matrix $\mathbf{M}(\xi)$ to be non-singular. With this notation the definition of the mean squared error \eqref{mse} can be extended for approximate designs  to
\begin{equation}\label{msea}
\mathrm{MSE}(\xi)= {\frac{1}{n}}\left(\mathds{1}_n\mathds{1}_n^\top \right)\otimes  \mathbf{M}(\xi)^{-1} + \left(\mathds{I}_n-{\frac{1}{n}}\mathds{1}_{n}\mathds{1}_{n}^\top \right)\otimes \left(\mathbf{M}(\xi)+\mathbf{\Delta}^{-1}\right)^{-1}, 
\end{equation} 
when we neglect the constant term $\frac{\sigma^2}{m}$. 

\subsection{Kiefer's criterion}

We define the Kiefer's $\Phi_q$-criterion for the prediction in RCR models for all values of $q>0$ as the following function of the
trace of the MSE matrix \eqref{mse} of the prediction:
\begin{equation}\label{ki}
\Phi_q=\left(\frac{1}{np}\,\mathrm{tr}\,\left(\mathrm{MSE}^{q}\right)\right)^{\frac{1}{q}}, \quad q \in (0,\infty).
\end{equation}
Then we extend the definition of the criterion for approximate designs and obtain the following result.
\begin{thm}\label{t1}
The Kiefer's $\Phi_q$-criterion for the prediction of individual parameters is for approximate designs given by
\begin{equation}\label{kia}
\Phi_q(\xi)=\left(\frac{1}{np}\left(\mathrm{tr}\,\left(\mathbf{M}(\xi)^{-q}\right)+(n-1)\,\mathrm{tr}\,\left(\left(\mathbf{M}(\xi)+\mathbf{\Delta}^{-1}\right)^{-q}\right)\right)\right)^{\frac{1}{q}}, \quad q \in (0,\infty).
\end{equation}
\end{thm}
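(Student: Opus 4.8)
The plan is to compute the trace of a power of the MSE matrix \eqref{msea} by exploiting its block structure as a sum of two Kronecker products whose left factors are mutually orthogonal idempotents. First I would observe that $\mathbf{P}_1 := \frac{1}{n}\mathds{1}_n\mathds{1}_n^\top$ and $\mathbf{P}_2 := \mathds{I}_n - \frac{1}{n}\mathds{1}_n\mathds{1}_n^\top$ are orthogonal projectors with $\mathbf{P}_1\mathbf{P}_2 = \mathbf{0}$, $\mathbf{P}_1^2 = \mathbf{P}_1$, $\mathbf{P}_2^2 = \mathbf{P}_2$, $\operatorname{rank}\mathbf{P}_1 = 1$ and $\operatorname{rank}\mathbf{P}_2 = n-1$. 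Writing $\mathbf{A} := \mathbf{M}(\xi)^{-1}$ and $\mathbf{B} := (\mathbf{M}(\xi)+\mathbf{\Delta}^{-1})^{-1}$, we have $\mathrm{MSE}(\xi) = \mathbf{P}_1\otimes\mathbf{A} + \mathbf{P}_2\otimes\mathbf{B}$.

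Next I would use the mixed-product property of the Kronecker product together with the orthogonality $\mathbf{P}_1\mathbf{P}_2=\mathbf{P}_2\mathbf{P}_1=\mathbf{0}$ to see that the two summands multiply to zero: $(\mathbf{P}_1\otimes\mathbf{A})(\mathbf{P}_2\otimes\mathbf{B}) = (\mathbf{P}_1\mathbf{P}_2)\otimes(\mathbf{A}\mathbf{B}) = \mathbf{0}$, and likewise in the other order. Hence the matrix behaves like a block-diagonal sum, and
\[
\mathrm{MSE}(\xi)^q = (\mathbf{P}_1\otimes\mathbf{A})^q + (\mathbf{P}_2\otimes\mathbf{B})^q = \mathbf{P}_1^q\otimes\mathbf{A}^q + \mathbf{P}_2^q\otimes\mathbf{B}^q = \mathbf{P}_1\otimes\mathbf{A}^q + \mathbf{P}_2\otimes\mathbf{B}^q,
\]
using $(\mathbf{U}\otimes\mathbf{V})^q = \mathbf{U}^q\otimes\mathbf{V}^q$ for integer $q$ and $\mathbf{P}_i^q = \mathbf{P}_i$. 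For non-integer $q>0$ I would justify the same identity via the spectral decomposition: diagonalizing $\mathbf{P}_1$ and $\mathbf{P}_2$ simultaneously (they commute) exhibits $\mathrm{MSE}(\xi)$ as orthogonally similar to a block-diagonal matrix with one block equal to $\mathbf{A}$ (repeated once, from the eigenvalue $1$ of $\mathbf{P}_1$) and one block equal to $\mathbf{B}$ (repeated $n-1$ times, from the eigenvalue $1$ of $\mathbf{P}_2$), the remaining directions contributing zero; taking the $q$-th power acts blockwise.

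Then I would take the trace, using $\operatorname{tr}(\mathbf{U}\otimes\mathbf{V}) = \operatorname{tr}(\mathbf{U})\operatorname{tr}(\mathbf{V})$, together with $\operatorname{tr}\mathbf{P}_1 = 1$ and $\operatorname{tr}\mathbf{P}_2 = n-1$, to obtain
\[
\operatorname{tr}\bigl(\mathrm{MSE}(\xi)^q\bigr) = \operatorname{tr}\bigl(\mathbf{M}(\xi)^{-q}\bigr) + (n-1)\operatorname{tr}\bigl((\mathbf{M}(\xi)+\mathbf{\Delta}^{-1})^{-q}\bigr),
\]
after which substituting into the definition \eqref{ki} and extending it to approximate designs yields \eqref{kia}. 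The main obstacle, and the only point requiring genuine care, is the treatment of fractional powers $q$: one must be sure that raising $\mathrm{MSE}(\xi)$ to a real power commutes with the block decomposition, which is why I would phrase that step through the simultaneous spectral decomposition of the commuting projectors rather than through the naive Kronecker-power identity; everything else is a routine application of standard Kronecker-product trace and product rules.
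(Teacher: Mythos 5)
Your proof is correct and rests on the same underlying fact as the paper's: that the spectrum of $\mathrm{MSE}(\xi)$ consists of the $p$ eigenvalues of $\mathbf{M}(\xi)^{-1}$ with multiplicity $1$ and the $p$ eigenvalues of $(\mathbf{M}(\xi)+\mathbf{\Delta}^{-1})^{-1}$ with multiplicity $n-1$, after which the trace of the $q$-th power factors accordingly. The only difference is that the paper simply cites this eigenvalue decomposition from an earlier work, whereas you derive it yourself from the orthogonal-projector Kronecker structure of \eqref{msea} (with appropriate care for fractional powers via simultaneous diagonalization), so your argument is a self-contained version of the same proof.
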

\begin{proof}
As proved in \cite{pru3}, ch.~5, the $np$ eigenvalues of $\mathrm{MSE}(\xi)$ are the $p$ eigenvalues $\eta_1, \dots, \eta_p$ of $\mathbf{M}(\xi)^{-1}$ with multiplicity $1$ and the $p$ eigenvalues $\mu_1, \dots, \mu_p$ of $\left(\mathbf{M}(\xi)+\mathbf{\Delta}^{-1}\right)^{-1}$ with multiplicity $n-1$. Then we obtain
\begin{eqnarray*}
\Phi_q(\xi)&=&\left(\frac{1}{np}\,\mathrm{tr}\,\left(\mathrm{MSE}(\xi)^{q}\right)\right)^{\frac{1}{q}}\\
&=&\left(\frac{1}{np}\left(\sum_{s=1}^{p}{\eta_s^q}+(n-1)\sum_{s=1}^{p}{\mu_s^q}\right)\right)^{\frac{1}{q}}\\
&=& \left(\frac{1}{np}\left(\mathrm{tr}\,\left(\mathbf{M}(\xi)^{-q}\right)+(n-1)\,\mathrm{tr}\,\left(\left(\mathbf{M}(\xi)+\mathbf{\Delta}^{-1}\right)^{-q}\right)\right)\right)^{\frac{1}{q}}.
\end{eqnarray*}
\end{proof}
Note that the Kiefer's criterion \eqref{kia} can be recognized as a weighted sum of the Kiefer's $\Phi_q$-criteria in fixed effects and Bayesian models (if we neglect the power $\frac{1}{q}$) . The weight of the Bayesian part increases with increasing number of individuals $n$. For models with only one individual optimal designs in fixed effects models are optimal for the prediction.

Particular cases of the Kiefer's $\Phi_q$-criterion (for $q\rightarrow 0$ and $q=1$), \textit{D}- and \textit{A}-criteria, have been considered in detail by \cite{pru1}. The \textit{E}-criterion, which can also be recognized as the limiting  Kiefer's criterion  for $q\rightarrow \infty$, will be discussed in the next section.

\subsection{\textit{E}-criterion}
We define the \textit{E}-criterion (eigenvalue criterion) for the prediction as the largest eigenvalue $\lambda_{max}$ of the MSE matrix \eqref{mse}:
\begin{equation}\label{e}
\Phi_E=\lambda_{max}.
\end{equation}
For approximate designs we obtain the following result.
\begin{thm}
The \textit{E}-criterion for the prediction of individual parameters is for approximate designs given by
\begin{equation}\label{ea}
\Phi_E(\xi)=\eta_{max}(\xi),
\end{equation}
where $\eta_{max}(\xi)$ denotes the largest eigenvalue of $\mathbf{M}(\xi)^{-1}$.
\end{thm}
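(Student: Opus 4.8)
The plan is to reuse the spectral description of $\mathrm{MSE}(\xi)$ from the proof of Theorem~\ref{t1} and then determine which of its two families of eigenvalues attains the maximum. Recall that the $np$ eigenvalues of $\mathrm{MSE}(\xi)$ are the $p$ eigenvalues $\eta_1,\dots,\eta_p$ of $\mathbf{M}(\xi)^{-1}$, each with multiplicity $1$, together with the $p$ eigenvalues $\mu_1,\dots,\mu_p$ of $\bigl(\mathbf{M}(\xi)+\mathbf{\Delta}^{-1}\bigr)^{-1}$, each with multiplicity $n-1$. Hence $\Phi_E(\xi)=\lambda_{max}=\max\{\eta_{max}(\xi),\mu_{max}(\xi)\}$, where $\eta_{max}(\xi)$ and $\mu_{max}(\xi)$ denote the largest eigenvalues of $\mathbf{M}(\xi)^{-1}$ and of $\bigl(\mathbf{M}(\xi)+\mathbf{\Delta}^{-1}\bigr)^{-1}$, respectively, and it remains only to show $\mu_{max}(\xi)\le\eta_{max}(\xi)$.

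For this I would invoke the order-reversing property of matrix inversion on the cone of positive definite matrices. Since $\mathbf{D}$ is positive definite, so are $\mathbf{\Delta}=m\mathbf{D}$ and $\mathbf{\Delta}^{-1}$; therefore $\mathbf{M}(\xi)+\mathbf{\Delta}^{-1}\succ\mathbf{M}(\xi)\succ\mathbf{0}$, and passing to inverses gives $\mathbf{0}\prec\bigl(\mathbf{M}(\xi)+\mathbf{\Delta}^{-1}\bigr)^{-1}\prec\mathbf{M}(\xi)^{-1}$. Because the largest eigenvalue of a symmetric matrix equals the maximum of the associated Rayleigh quotient, the Loewner inequality $A\prec B$ forces $\lambda_{max}(A)\le\lambda_{max}(B)$; applied to the displayed inequality this yields $\mu_{max}(\xi)\le\eta_{max}(\xi)$ (in fact with strict inequality). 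Combining this with the previous paragraph gives $\lambda_{max}=\eta_{max}(\xi)$, which is the assertion \eqref{ea}.

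I do not anticipate a genuine obstacle here: the result is essentially a corollary of the eigenvalue bookkeeping already carried out for Theorem~\ref{t1} together with the monotonicity of inversion. The only points deserving explicit mention are that $\mathbf{\Delta}^{-1}$ is positive definite — which is what makes the shift $\mathbf{M}(\xi)\mapsto\mathbf{M}(\xi)+\mathbf{\Delta}^{-1}$ strictly increase the matrix, hence strictly decrease its inverse and in particular its top eigenvalue — and the resulting interpretation, worth recording as a remark, that the worst-case prediction error is governed solely by the fixed-effects information matrix, so that an $E$-optimal design in the fixed effects model retains its optimality for the prediction of the individual parameters.
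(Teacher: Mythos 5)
Your proof is correct and follows essentially the same route as the paper: both reduce the claim to the eigenvalue bookkeeping from Theorem~\ref{t1} and then compare $\eta_{max}(\xi)$ with $\mu_{max}(\xi)$ via the Loewner ordering $\mathbf{M}(\xi)\prec\mathbf{M}(\xi)+\mathbf{\Delta}^{-1}$. The only cosmetic difference is that you invert first (using antitonicity of matrix inversion) and compare largest eigenvalues, whereas the paper compares smallest eigenvalues of the un-inverted matrices and then takes reciprocals; the substance is identical.
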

\begin{proof}
It is easy to see that $\mathbf{M}(\xi)<\mathbf{M}(\xi)+\mathbf{\Delta}^{-1}$ in Loewner ordering. Therefore, the smallest eigenvalue $\zeta_{min}(\xi)$ of $\mathbf{M}(\xi)$ is smaller than the smallest eigenvalue $\nu_{min}(\xi)$ of $\mathbf{M}(\xi)+\mathbf{\Delta}^{-1}$ (see e.\,g. \cite{fed2}, p.~11) and, consequently,  the largest eigenvalue $\eta_{max}(\xi)=\frac{1}{\zeta_{min}(\xi)}$ of $\mathbf{M}(\xi)^{-1}$ is larger than the largest eigenvalue $\mu_{max}(\xi)=\frac{1}{\nu_{min}(\xi)}$ of $\left(\mathbf{M}(\xi)+\mathbf{\Delta}^{-1}\right)^{-1}$. Then making use of the proof of Theorem~\ref{t1} we obtain the result \eqref{ea}.
\end{proof}
\begin{fol}
\textit{E}-optimal designs in the fixed effects model are \textit{E}-optimal for the prediction of individual parameters in the random coefficient regression model.
\end{fol}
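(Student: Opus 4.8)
The plan is to observe that the corollary is essentially an immediate consequence of the preceding theorem, because the $E$-criterion for the prediction has just been shown to coincide, as a function of the approximate design $\xi$, with the $E$-criterion of the associated fixed effects model.

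First I would recall the classical definition of $E$-optimality in the fixed effects setting (\cite{kie}): a design $\xi^\ast$ is $E$-optimal if it minimizes the largest eigenvalue $\eta_{max}(\xi)$ of $\mathbf{M}(\xi)^{-1}$ over all competing approximate designs with non-singular information matrix, equivalently if it maximizes the smallest eigenvalue of $\mathbf{M}(\xi)$. Then I would invoke formula \eqref{ea} from the previous theorem, which states that the $E$-criterion for the prediction of the individual parameters in the RCR model satisfies $\Phi_E(\xi)=\eta_{max}(\xi)$ for every such design $\xi$.

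Putting these together, the objective function to be minimized in the RCR prediction problem is literally the same function of $\xi$ as the fixed effects $E$-criterion; in particular it does not depend on the number of individuals $n$ nor on the adjusted dispersion matrix $\mathbf{\Delta}$. Consequently a design minimizes $\Phi_E(\xi)$ if and only if it minimizes $\eta_{max}(\xi)$, i.\,e. the sets of $E$-optimal designs in the two models coincide, which is the assertion of the corollary.

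The only point needing a little care — and the closest thing to an obstacle here — is to make sure the two optimization problems are posed over the same class of admissible designs and under the same regularity assumption (approximate designs with non-singular $\mathbf{M}(\xi)$, as assumed throughout Section~\ref{k3}). Once this is granted, the equivalence of the criteria is an exact identity rather than an inequality, so no equivalence theorem or directional-derivative computation is required.
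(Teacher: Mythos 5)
Your proposal is correct and matches the paper's reasoning: the paper offers no separate proof of the corollary precisely because, as you argue, Theorem~2 shows $\Phi_E(\xi)=\eta_{max}(\xi)$ is literally the fixed effects $E$-criterion, so the two minimization problems (over the same class of approximate designs with non-singular $\mathbf{M}(\xi)$) have the same solutions. Your added remark about checking that the admissible design classes coincide is a sensible precaution but introduces nothing beyond what the paper implicitly assumes.
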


\subsection{\textit{G}-criterion}
For the prediction in RCR models we define the \textit{G}-criterion (global criterion) as the maximal sum of individual expected squared differences of the predicted and real response across all individuals with respect to all possible observational settings:
\begin{equation}\label{g}
\Phi_{G} =\max_{x\in \mathcal{X}}\sum_{i=1}^{n}\mathrm{E}\left( \left(\mathbf{f}(x)^\top (\hat{\mbox{\boldmath{$\beta $}}}_i-\mbox{\boldmath{$\beta $}}_i)\right)^2 \right).
\end{equation}
We receive the following criterion for the approximate designs.
\begin{thm}
The \textit{G}-criterion for the prediction of individual parameters is for approximate designs given by
\begin{equation}\label{ga}
\Phi_{G}(\xi) = \max_{x\in \mathcal{X}}\left(\mathbf{f}(x)^\top\left(\mathbf{M}(\xi)^{-1}+(n-1)\left(\mathbf{M}(\xi)+\mathbf{\Delta}^{-1}\right)^{-1}\right)\mathbf{f}(x)\right).
\end{equation}
\end{thm}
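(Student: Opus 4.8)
The plan is to compute the expectation in \eqref{g} for each individual $i$ directly from the mean squared error matrix \eqref{msea}. The key observation is that $\mathrm{E}\left( \left(\mathbf{f}(x)^\top (\hat{\mbox{\boldmath{$\beta $}}}_i-\mbox{\boldmath{$\beta $}}_i)\right)^2 \right)=\mathbf{f}(x)^\top \mathrm{MSE}_i\,\mathbf{f}(x)$, where $\mathrm{MSE}_i$ is the $i$-th diagonal block of the MSE matrix, i.e. the mean squared error matrix of $\hat{\mbox{\boldmath{$\beta $}}}_i$ alone. Summing over $i$ therefore yields $\mathbf{f}(x)^\top \left(\sum_{i=1}^n \mathrm{MSE}_i\right)\mathbf{f}(x)$, so the entire task reduces to identifying the trace (in the block sense) of the $np\times np$ MSE matrix, that is, the sum of its diagonal $p\times p$ blocks.

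First I would read off the diagonal blocks of \eqref{msea}. In the Kronecker representation, the factor $\frac1n\mathds{1}_n\mathds{1}_n^\top$ has every diagonal entry equal to $\frac1n$, so it contributes $\frac1n\mathbf{M}(\xi)^{-1}$ to each of the $n$ diagonal blocks; the factor $\mathds{I}_n-\frac1n\mathds{1}_n\mathds{1}_n^\top$ has every diagonal entry equal to $1-\frac1n$, so it contributes $\left(1-\frac1n\right)\left(\mathbf{M}(\xi)+\mathbf{\Delta}^{-1}\right)^{-1}$ to each diagonal block. Hence each $\mathrm{MSE}_i$ equals $\frac1n\mathbf{M}(\xi)^{-1}+\frac{n-1}{n}\left(\mathbf{M}(\xi)+\mathbf{\Delta}^{-1}\right)^{-1}$, and summing over the $n$ individuals gives exactly $\mathbf{M}(\xi)^{-1}+(n-1)\left(\mathbf{M}(\xi)+\mathbf{\Delta}^{-1}\right)^{-1}$. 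Substituting this into the sum-of-quadratic-forms expression and taking the maximum over $x\in\mathcal{X}$ produces \eqref{ga}.

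The one genuinely non-routine point is justifying the identity $\mathrm{E}\left( \left(\mathbf{f}(x)^\top (\hat{\mbox{\boldmath{$\beta $}}}_i-\mbox{\boldmath{$\beta $}}_i)\right)^2 \right)=\mathbf{f}(x)^\top \mathrm{MSE}_i\,\mathbf{f}(x)$ in the random-coefficients setting, where $\mbox{\boldmath{$\beta $}}_i$ is itself random. This follows because $\mathrm{MSE}_i=\mathrm{E}\left((\hat{\mbox{\boldmath{$\beta $}}}_i-\mbox{\boldmath{$\beta $}}_i)(\hat{\mbox{\boldmath{$\beta $}}}_i-\mbox{\boldmath{$\beta $}}_i)^\top\right)$ is by definition the (unconditional) second-moment matrix of the prediction error, so $\mathbf{f}(x)^\top\mathrm{MSE}_i\,\mathbf{f}(x)=\mathrm{E}\left((\mathbf{f}(x)^\top(\hat{\mbox{\boldmath{$\beta $}}}_i-\mbox{\boldmath{$\beta $}}_i))^2\right)$ by linearity of expectation and the scalar nature of $\mathbf{f}(x)^\top(\hat{\mbox{\boldmath{$\beta $}}}_i-\mbox{\boldmath{$\beta $}}_i)$; here one uses that the predictor is unbiased in the sense $\mathrm{E}(\hat{\mbox{\boldmath{$\beta $}}}_i-\mbox{\boldmath{$\beta $}}_i)=\mathbf{0}$, which is guaranteed by \eqref{blup} being the BLUP. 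After that, the passage from the exact MSE \eqref{mse} to the approximate-design MSE \eqref{msea} is exactly the replacement of $\mathbf{F}^\top\mathbf{F}$ by $\mathbf{M}(\xi)$ and $\mathbf{D}^{-1}$ by $\mathbf{\Delta}^{-1}$ together with dropping the constant $\sigma^2/m$, as already set up before Theorem~\ref{t1}, so no further obstacle arises.

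I do not anticipate any analytic difficulty in the maximization over $\mathcal{X}$ itself, since the criterion value is simply defined as that maximum; the content of the theorem is entirely the explicit reduction of the sum of expected squared prediction errors to a single quadratic form in $\mathbf{f}(x)$. The proof is therefore short: expand the expectation blockwise, collect the diagonal blocks of the Kronecker sum, and sum.
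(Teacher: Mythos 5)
Your proposal is correct and follows essentially the same route as the paper: the paper rewrites the sum of expected squared prediction errors as $\mathrm{tr}\left(\mathrm{MSE}(\xi)\left(\mathds{I}_n\otimes\mathbf{f}(x)\mathbf{f}(x)^\top\right)\right)$ and evaluates it via the Kronecker structure, which is exactly your summation of the diagonal $p\times p$ blocks of the MSE matrix. Your explicit justification of $\mathrm{E}\left(\left(\mathbf{f}(x)^\top(\hat{\mbox{\boldmath{$\beta$}}}_i-\mbox{\boldmath{$\beta$}}_i)\right)^2\right)=\mathbf{f}(x)^\top\mathrm{MSE}_i\,\mathbf{f}(x)$ via unbiasedness of the BLUP matches the paper's implicit use of the same fact when it passes from the expectation to the variance.
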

\begin{proof}
\begin{eqnarray*}
\Phi_{G} &=&\max_{x\in \mathcal{X}}\sum_{i=1}^{n}\mathrm{var}\left(\mathbf{f}(x)^\top (\hat{\mbox{\boldmath{$\beta $}}}_i-\mbox{\boldmath{$\beta $}}_i) \right)\\
&=&\max_{x\in \mathcal{X}}\sum_{i=1}^{n}\mathrm{tr}\left(\mathrm{Cov} (\hat{\mbox{\boldmath{$\beta $}}}_i-\mbox{\boldmath{$\beta $}}_i)\, \mathbf{f}(x)\,\mathbf{f}(x)^\top\right)\\
&=&\max_{x\in \mathcal{X}}\mathrm{tr}\left(\mathrm{MSE} \left(\mathbb{I}_n\otimes \left(\mathbf{f}(x)\,\mathbf{f}(x)^\top\right)\right)\right)\\
\end{eqnarray*}
\begin{eqnarray*}
\Phi_{G}(\xi) &=&\max_{x\in \mathcal{X}}\mathrm{tr}\left(\mathrm{MSE}(\xi) \left(\mathbb{I}_n\otimes (\mathbf{f}(x)\,\mathbf{f}(x)^\top)\right)\right)\\
&=&\max_{x\in \mathcal{X}}\left(\mathrm{tr}\left(\mathbf{M}(\xi)^{-1}\mathbf{f}(x)\,\mathbf{f}(x)^\top\right)+(n-1)\,\mathrm{tr}\left(\left(\mathbf{M}(\xi)+\mathbf{\Delta}^{-1}\right)^{-1}\mathbf{f}(x)\,\mathbf{f}(x)^\top\right)\right)\\
&=& \max_{x\in \mathcal{X}}\left(\mathbf{f}(x)^\top\left(\mathbf{M}(\xi)^{-1}+(n-1)\left(\mathbf{M}(\xi)+\mathbf{\Delta}^{-1}\right)^{-1}\right)\mathbf{f}(x)\right)
\end{eqnarray*}
\end{proof}
The following property can be easily verified for the $G$-criterion \eqref{ga} .
\begin{lem}\label{l1}
Let $\xi_1$ and $\xi_2$ be approximate designs of form \eqref{designa} so that $\mathbf{M}(\xi_1)\leq\mathbf{M}(\xi_2)$ in Loewner ordering. Then it holds for the \textit{G}-criterion \eqref{ga} that $\Phi_{G}(\xi_1)\geq\Phi_{G}(\xi_2)$.
\end{lem}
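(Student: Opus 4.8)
The plan is to exploit the monotonicity of matrix inversion with respect to the Loewner order. First I would observe that the function $g(x,\xi) := \mathbf{f}(x)^\top\bigl(\mathbf{M}(\xi)^{-1}+(n-1)(\mathbf{M}(\xi)+\mbox{\boldmath{$\Delta$}}^{-1})^{-1}\bigr)\mathbf{f}(x)$ inside the maximum in \eqref{ga} is, for each fixed $x\in\mathcal{X}$, monotonically non-increasing in $\mathbf{M}(\xi)$ with respect to the Loewner ordering. This rests on two standard facts: (i) if $\mathbf{A}\leq\mathbf{B}$ are positive definite, then $\mathbf{B}^{-1}\leq\mathbf{A}^{-1}$ (see e.\,g. \cite{fed2}, p.~11, which is already cited in the proof of Theorem~2); and (ii) $\mathbf{M}(\xi_1)\leq\mathbf{M}(\xi_2)$ implies $\mathbf{M}(\xi_1)+\mbox{\boldmath{$\Delta$}}^{-1}\leq\mathbf{M}(\xi_2)+\mbox{\boldmath{$\Delta$}}^{-1}$, since adding the same matrix preserves the order.

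Next I would combine these. From $\mathbf{M}(\xi_1)\leq\mathbf{M}(\xi_2)$ we get $\mathbf{M}(\xi_2)^{-1}\leq\mathbf{M}(\xi_1)^{-1}$ and $\bigl(\mathbf{M}(\xi_2)+\mbox{\boldmath{$\Delta$}}^{-1}\bigr)^{-1}\leq\bigl(\mathbf{M}(\xi_1)+\mbox{\boldmath{$\Delta$}}^{-1}\bigr)^{-1}$. Since $n-1\geq 0$, the sum $\mathbf{M}(\xi_2)^{-1}+(n-1)\bigl(\mathbf{M}(\xi_2)+\mbox{\boldmath{$\Delta$}}^{-1}\bigr)^{-1}\leq \mathbf{M}(\xi_1)^{-1}+(n-1)\bigl(\mathbf{M}(\xi_1)+\mbox{\boldmath{$\Delta$}}^{-1}\bigr)^{-1}$ in Loewner ordering. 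Sandwiching this between $\mathbf{f}(x)$ and $\mathbf{f}(x)^\top$ gives $g(x,\xi_2)\leq g(x,\xi_1)$ for every $x\in\mathcal{X}$, because $\mathbf{A}\leq\mathbf{B}$ entails $\mathbf{v}^\top\mathbf{A}\mathbf{v}\leq\mathbf{v}^\top\mathbf{B}\mathbf{v}$ for all vectors $\mathbf{v}$.

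Finally, taking the maximum over $x\in\mathcal{X}$ on both sides of the pointwise inequality $g(x,\xi_2)\leq g(x,\xi_1)$ yields $\max_{x\in\mathcal{X}} g(x,\xi_2)\leq\max_{x\in\mathcal{X}} g(x,\xi_1)$, i.\,e. $\Phi_G(\xi_2)\leq\Phi_G(\xi_1)$, which is the claim. I do not anticipate a genuine obstacle here — the statement is labelled as "easily verified", and the only thing to be a little careful about is making sure $n-1$ is nonnegative (which holds since $n\geq 1$) so that scaling by it preserves the Loewner order, and that all the relevant matrices are positive definite so the inverses exist and the cited inversion-reversal lemma applies; the full column rank of $\mathbf{F}$ together with the non-singularity assumption on $\mathbf{M}(\xi)$ and positive definiteness of $\mathbf{D}$ (hence of $\mbox{\boldmath{$\Delta$}}$) guarantees this.
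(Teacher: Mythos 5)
Your proof is correct and is exactly the standard argument the paper has in mind: the paper offers no written proof, merely asserting the lemma is "easily verified," and the antitonicity of matrix inversion under the Loewner order combined with the order-preservation of quadratic forms and of the pointwise maximum is the intended verification. All the side conditions you flag (positive definiteness of the matrices involved and $n-1\geq 0$) are indeed guaranteed by the paper's assumptions, so there is no gap.
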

Note that the \textit{G}-criterion \eqref{ga} is not differentiable and, therefore, no optimality condition in the sense of an equivalence theorem (see \cite{kie2}) is straightforward to formulate. Therefore, we will consider in detail the following particular model .
\vspace{2mm}

\textbf{Particular model: straight line regression}

We consider the linear regression model
\begin{equation}\label{lr}
	Y_{ij}= \beta_{i1}+\beta_{i2}x_j+\varepsilon_{ij} 
\end{equation}
for two different experimental regions $\mathcal{X}_1=[0,a]$, $a>0$, and $\mathcal{X}_2=[-b,b]$, $b>0$. 

For this model the function
\begin{equation}
\Phi(x, \xi) = \mathbf{f}(x)^\top\left(\mathbf{M}(\xi)^{-1}+(n-1)\left(\mathbf{M}(\xi)+\mathbf{\Delta}^{-1}\right)^{-1}\right)\mathbf{f}(x),
\end{equation}
which can be also recognized as the sensitivity function of the \textit{D}-criterion in RCR models (see \cite{pru1}), is a parabola with a positive leading term (with respect to $x$). Therefore, $\Phi(x, \xi)$ achieves its maxima at the ends of the intervals. Then the \textit{G}-criterion \eqref{ga} simplifies to
\begin{equation*}
\Phi_G(\xi_1) = \mathrm{max}\left\{\Phi(0, \xi_1),\Phi(a, \xi_1)\right\}
\end{equation*}
or
\begin{equation*}
\Phi_G(\xi_2) = \mathrm{max}\left\{\Phi(-b, \xi_2),\Phi(b, \xi_2)\right\}
\end{equation*}
for all non-singular designs $\xi_1$ on $\mathcal{X}_1$ or $\xi_2$ on $\mathcal{X}_2$, respectively. 

Further we will make use of the following simple lemmas.
\begin{lem}\label{l2}
Let
\begin{equation*}
\xi= \left( \begin{array}{ccc}  x_1 & , \dots, & x_k \\  w_1 &, \dots,& w_k \end{array} \right)
\end{equation*}
be an approximate design in model \eqref{lr} on $\mathcal{X}_1$. Then it holds for the approximate design
\begin{equation*}
\xi'= \left( \begin{array}{ccc}  0 & a \\  1-w' & w' \end{array} \right),
\end{equation*}
where $w'=\frac{1}{a}\sum_{j=1}^k{x_jw_j}$, that $\mathbf{M}(\xi)\leq\mathbf{M}(\xi')$ in Loewner ordering.
\end{lem}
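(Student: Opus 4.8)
The plan is to compute both information matrices explicitly, since the straight-line model has $\mathbf{f}(x)=(1,x)^\top$, and then to compare them entrywise. For the design $\xi$ one gets
\[
\mathbf{M}(\xi)=\begin{pmatrix} 1 & \bar x\\ \bar x & \overline{x^2}\end{pmatrix},\qquad \bar x=\sum_{j=1}^k w_j x_j,\quad \overline{x^2}=\sum_{j=1}^k w_j x_j^2,
\]
while the two-point design $\xi'$ on $\{0,a\}$ with weights $1-w'$ and $w'$ gives
\[
\mathbf{M}(\xi')=(1-w')\,\mathbf{f}(0)\mathbf{f}(0)^\top+w'\,\mathbf{f}(a)\mathbf{f}(a)^\top=\begin{pmatrix} 1 & w'a\\ w'a & w'a^2\end{pmatrix}.
\]
As a preliminary sanity check I would verify that $\xi'$ is a legitimate approximate design: since every $x_j\in[0,a]$, we have $0\le\sum_j w_j x_j\le a$, hence $w'=\frac1a\sum_j w_j x_j\in[0,1]$.

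The key observation is that the definition of $w'$ is exactly what makes the off-diagonal entries agree: $w'a=\sum_j w_j x_j=\bar x$, and the $(1,1)$ entries are both $1$. Hence the two matrices differ only in the lower-right entry, and
\[
\mathbf{M}(\xi')-\mathbf{M}(\xi)=\begin{pmatrix} 0 & 0\\ 0 & a\bar x-\overline{x^2}\end{pmatrix}
=\begin{pmatrix} 0 & 0\\ 0 & \sum_{j=1}^k w_j x_j(a-x_j)\end{pmatrix}.
\]
This matrix is positive semidefinite precisely when $\sum_{j=1}^k w_j x_j(a-x_j)\ge 0$, and that inequality is immediate because each $x_j$ lies in $[0,a]$, so $x_j\ge 0$ and $a-x_j\ge 0$, making every summand nonnegative. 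Therefore $\mathbf{M}(\xi')-\mathbf{M}(\xi)\ge 0$, i.e.\ $\mathbf{M}(\xi)\le\mathbf{M}(\xi')$ in the Loewner ordering.

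There is no genuine obstacle here; the argument is a short direct computation. The only points requiring care are that the matching of the off-diagonal entries is forced by the particular choice of $w'$, and that the sign of the $(2,2)$-difference relies on $[0,a]$ being bounded by both of its endpoints (so the analogous statement on $\mathcal{X}_2=[-b,b]$ will need a different reference design, as the subsequent development presumably handles).
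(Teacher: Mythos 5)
Your proof is correct; the paper states Lemma~2 without proof (calling it a ``simple lemma''), and your direct computation --- matching the off-diagonal entries via the choice of $w'$ and reducing the Loewner comparison to $\sum_j w_j x_j(a-x_j)\ge 0$ --- is exactly the argument the author evidently had in mind. Your closing remark about why $\mathcal{X}_2$ needs a different reference design is also consistent with how the paper proceeds in Lemma~3.
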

\begin{lem}\label{l3}
Let
\begin{equation*}
\xi= \left( \begin{array}{ccc}  x_1 & , \dots, & x_k \\  w_1 &, \dots,& w_k \end{array} \right)
\end{equation*}
be an approximate design in model \eqref{lr} on $\mathcal{X}_2$. Then it holds for the approximate design
\begin{equation*}
\xi'= \left( \begin{array}{ccc}  -b & b \\  1-w' & w' \end{array} \right),
\end{equation*}
where $w'=\frac{1}{2}\left(\frac{1}{b}\sum_{j=1}^k{x_jw_j}+1\right)$, that $\mathbf{M}(\xi)\leq\mathbf{M}(\xi')$ in Loewner ordering.
\end{lem}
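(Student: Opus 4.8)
The plan is to use the fact that in model~\eqref{lr} the regression vector is $\mathbf{f}(x)=(1,x)^\top$, so the information matrix of any design depends only on its first two moments. Writing $\mu_1=\sum_{j=1}^k w_jx_j$ and $\mu_2=\sum_{j=1}^k w_jx_j^2$, I would first record
\[
\mathbf{M}(\xi)=\begin{pmatrix}1 & \mu_1\\ \mu_1 & \mu_2\end{pmatrix},
\]
and then compute the information matrix of the two-point design $\xi'$ supported on $\{-b,b\}$ with weights $1-w'$ and $w'$, namely
\[
\mathbf{M}(\xi')=(1-w')\begin{pmatrix}1 & -b\\ -b & b^2\end{pmatrix}+w'\begin{pmatrix}1 & b\\ b & b^2\end{pmatrix}=\begin{pmatrix}1 & b(2w'-1)\\ b(2w'-1) & b^2\end{pmatrix}.
\]

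Next I would substitute the prescribed weight $w'=\tfrac12\bigl(\tfrac1b\sum_{j=1}^k x_jw_j+1\bigr)=\tfrac12\bigl(\mu_1/b+1\bigr)$, which gives $b(2w'-1)=\mu_1$. Hence $\mathbf{M}(\xi)$ and $\mathbf{M}(\xi')$ coincide in the $(1,1)$- and off-diagonal entries, and
\[
\mathbf{M}(\xi')-\mathbf{M}(\xi)=\begin{pmatrix}0 & 0\\ 0 & b^2-\mu_2\end{pmatrix}.
\]
It then suffices to observe that this matrix is nonnegative definite, which reduces to the scalar inequality $\mu_2\le b^2$. This holds because every support point satisfies $x_j\in[-b,b]$, hence $x_j^2\le b^2$, so $\mu_2=\sum_{j}w_jx_j^2\le b^2\sum_j w_j=b^2$. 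A diagonal matrix with nonnegative diagonal is nonnegative definite, whence $\mathbf{M}(\xi)\le\mathbf{M}(\xi')$ in the Loewner ordering.

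Finally I would verify that $\xi'$ is a genuine approximate design of the stated form: since $\mu_1$ is a convex combination of points in $[-b,b]$ it lies in $[-b,b]$, so $\mu_1/b\in[-1,1]$ and $w'\in[0,1]$. There is no real obstacle in this argument — the only mild point of care is this admissibility check, which rests on the same boundedness of the design region that gives $\mu_2\le b^2$. (Lemma~\ref{l2} for $\mathcal{X}_1=[0,a]$ is proved identically, the analogue of $\mu_2\le b^2$ being $\mu_2\le a\mu_1$, which is valid since $0\le x_j\le a$ forces $x_j^2\le a x_j$.)
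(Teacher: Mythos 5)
Your proof is correct and is precisely the elementary moment computation the paper has in mind: the author states Lemmas~\ref{l2} and \ref{l3} as ``simple lemmas'' without proof, and your argument (matching the first two moments by the choice of $w'$ and reducing the Loewner comparison to $\mu_2\le b^2$, with the admissibility check $w'\in[0,1]$) supplies exactly the missing details, including the correct analogue $\mu_2\le a\mu_1$ for Lemma~\ref{l2}.
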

Then it follows directly from Lemmas~\ref{l1}, \ref{l2} and \ref{l3} that at least one of $G$-optimal designs in model \eqref{lr} on $\mathcal{X}_1$ or $\mathcal{X}_2$ is of the form 
\begin{equation*}
\xi_1= \left( \begin{array}{ccc}  0 & a \\  1-w_1 & w_1 \end{array} \right)
\end{equation*}
or
\begin{equation*}
\xi_2= \left( \begin{array}{ccc}  -b & b \\  1-w_2 & w_2 \end{array} \right),
\end{equation*}
respectively. Then only the optimal weights $w_1^*$ and $w_2^*$ have to be determined.

Further we will additionally assume the diagonal structure of the covariance matrix of the random effects: $\mathbf{D}=\mathrm{diag}(d_1,d_2)$. Then it is easy to verify that $\Phi(0, \xi_1)$ and $\Phi(-b, \xi_2)$ increase and $\Phi(a, \xi_1)$ and $\Phi(b, \xi_2)$ decrease with increasing values of $w_1$ and $w_2$, respectively. Consequently, the $G$-optimal designs are solutions of the equations
\begin{equation}\label{oc1}
\Phi(0, \xi_1)=\Phi(a, \xi_1).
\end{equation}
and
\begin{equation}\label{oc2}
\Phi(-b, \xi_2)=\Phi(b, \xi_2).
\end{equation}
Note that if equation \eqref{oc1} or \eqref{oc2} has no solutions, the resulting optimal designs on $\mathcal{X}_1$ or $\mathcal{X}_2$, respectively, will lead to a singular information matrix.

Then equation \eqref{oc1} may be represented for $c_s=1/(md_s)$, $s=1,2$, in form
\begin{equation}\label{oc11}
\frac{2w_1-1}{w_1(1-w_1)}=(n-1)\frac{a^2(1+c_1-2w_1)}{(1+c_1)(a^2w_1+c_2)-a^2w_1^2}.
\end{equation}
For condition \eqref{oc2} we obtain 
\begin{equation}\label{oc21}
\frac{2w_2-1}{w_2(1-w_2)}=(n-1)\frac{4b^2(1-2w_2)}{(1+c_1)(b^2+c_2)-b^2(2w_2-1)^2}.
\end{equation}
It is easy to see that the only solution of \eqref{oc21} is given by the optimal weight $w_2^*=0.5$.

According to  \cite{pru1}, the optimality condition for the $D$-criterion is given by 
\begin{equation}\label{oc}
\Phi(x, \xi)\leq p+(n-1)\left(\left(\mathbf{M}(\xi)+\mathbf{\Delta}^{-1}\right)^{-1}\mathbf{M}(\xi)\right),
\end{equation}
for all $x\in\mathcal{X}$, and equality in \eqref{oc} for all support points. This condition coincides with \eqref{oc11} for $\mathcal{X}=\mathcal{X}_1$ and \eqref{oc21} for $\mathcal{X}=\mathcal{X}_2$, which implies for both design regions the equivalence of the \textit{D}- and \textit{G}-criteria in the linear regression model \eqref{lr} with the diagonal covariance structure.

\section{Discussion}

We have discussed Kiefer's the $\Phi_q$-criteria and the global (\textit{G}-) criterion in RCR models. The obtained general form of the Kiefer's criterion can be recognized as the weighted sum of the Kiefer's criterion in fixed effects and Bayesian models, where the weight of the Baysian part increases with increasing number of individuals. For the \textit{E}-criterion (particular Kiefer's $\Phi_q$-criterion with $q\rightarrow\infty$), it was proved that optimal designs in fixed effects models retain their optimality for the prediction in RCR models. The \textit{G}-criterion cannot be factorized in the fixed effects and Bayesian parts. This criterion has been discussed in detail for ordinary linear regression on specific experimental regions, $[0,a]$, $a>0$, and $[-b,b]$, $b>0$. For this special case, the equivalence of \textit{D}- and \textit{G}-criteria has been established. However, the equivalence of these two criteria does not hold in general for RCR models.

\section*{Acknowledgments} This research has been supported by grant SCHW 531/16-1 of the German Research Foundation (DFG). The author thanks to  Radoslav Harman and Rainer Schwabe for fruitful discussions.

\bibliographystyle{natbib}
\bibliography{prus2}

\end{document}